\newcommand{\norm}[1]{||#1||}
\begin{document}
\setlength{\baselineskip}{16pt}

\parindent 0.5cm
\evensidemargin 0cm \oddsidemargin 0cm \topmargin 0cm \textheight
22cm \textwidth 16cm \footskip 2cm \headsep 0cm

\newtheorem{theorem}{Theorem}[section]
\newtheorem{lemma}[theorem]{Lemma}
\newtheorem{proposition}[theorem]{Proposition}
\newtheorem{definition}{Definition}[section]
\newtheorem{example}{Example}[section]
\newtheorem{corollary}[theorem]{Corollary}

\newtheorem{remark}{Remark}[section]
\newtheorem{property}[theorem]{Property}
\numberwithin{equation}{section}
\newtheorem{mainthm}{Theorem}
\newtheorem{mainlem}{Lemma}

\numberwithin{equation}{section}

\def\p{\partial}
\def\I{\textit}
\def\R{\mathbb R}
\def\C{\mathbb C}
\def\u{\underline}
\def\l{\lambda}
\def\a{\alpha}
\def\O{\Omega}
\def\e{\epsilon}
\def\ls{\lambda^*}
\def\D{\displaystyle}
\def\wyx{ \frac{w(y,t)}{w(x,t)}}
\def\imp{\Rightarrow}
\def\tE{\widetilde E}
\def\tX{\widetilde X}
\def\tH{\widetilde H}
\def\tu{\widetilde u}
\def\d{\mathcal D}
\def\aa{\mathcal A}
\def\DH{\mathcal D(\tH)}
\def\bE{\overline E}
\def\bH{\overline H}
\def\M{\mathcal M}
\renewcommand{\labelenumi}{(\arabic{enumi})}

\def\disp{\displaystyle}
\def\undertex#1{$\underline{\hbox{#1}}$}
\def\card{\mathop{\hbox{card}}}
\def\sgn{\mathop{\hbox{sgn}}}
\def\exp{\mathop{\hbox{exp}}}
\def\OFP{(\Omega,{\cal F},\PP)}
\newcommand\JM{Mierczy\'nski}
\newcommand\RR{\ensuremath{\mathbb{R}}}
\newcommand\CC{\ensuremath{\mathbb{C}}}
\newcommand\QQ{\ensuremath{\mathbb{Q}}}
\newcommand\ZZ{\ensuremath{\mathbb{Z}}}
\newcommand\NN{\ensuremath{\mathbb{N}}}
\newcommand\PP{\ensuremath{\mathbb{P}}}
\newcommand\abs[1]{\ensuremath{\lvert#1\rvert}}

\newcommand\normf[1]{\ensuremath{\lVert#1\rVert_{f}}}
\newcommand\normfRb[1]{\ensuremath{\lVert#1\rVert_{f,R_b}}}
\newcommand\normfRbone[1]{\ensuremath{\lVert#1\rVert_{f, R_{b_1}}}}
\newcommand\normfRbtwo[1]{\ensuremath{\lVert#1\rVert_{f,R_{b_2}}}}
\newcommand\normtwo[1]{\ensuremath{\lVert#1\rVert_{2}}}
\newcommand\norminfty[1]{\ensuremath{\lVert#1\rVert_{\infty}}}

\title{Spatial-Homogeneity of Stable Solutions of Almost-Periodic Parabolic Equations with Concave Nonlinearity}

\author {
\\
Yi Wang\thanks{Partially supported by NSF of China No.11771414, 11471305.} \\
Wu Wen-Tsun Key Laboratory\\
School of Mathematical Science\\
University of Science and Technology of China\\
Hefei, Anhui, 230026, People¡¯s Republic of China
\\
\\
Jianwei Xiao\\
Department of Mathematics\\
University of California, Berkeley
\\ Berkeley, CA 94720-3840, USA
\\
\\
Dun Zhou\thanks{Corresponding author, E-mail address: zhd1986@mail.ustc.edu.cn \& zhoudun@njust.edu.cn, partially supported by NSF of China No.11601498 and the Fundamental Research Funds for the Central Universities No.30918011339.}\\
Department of Mathematics
\\ Nanjing University of Science and Technology
\\ Nanjing, Jiangsu, 210094, People¡¯s Republic of China
\\
\\
}
\date{}

\maketitle
% insert the table of contents
%\tableofcontents

%---------------------SECTION DIVIDE LINE---------------------------
\begin{abstract}
We study the spatial-homogeneity of stable solutions of almost-periodic parabolic equations. It is shown that if the nonlinearity satisfies a concave or convex condition, then any linearly stable almost automorphic solution is spatially-homogeneous; and moreover, the frequency module of the solution is contained in that of the nonlinearity.
\end{abstract}

\section{Introduction}
We consider the semilinear parabolic equation with Neumann boundary condition
\begin{equation}\label{main-equation}
\begin{split}
& u_t=\Delta u+f(t,u,\nabla u),\quad  t>0,x\in \Omega \\
& \frac{\partial u}{\partial n}\mid_{\p\Omega}=0 ,\quad  t>0
\end{split}
\end{equation}
where $\Omega\subset \mathbb{R}^n$ is a smooth bounded domain and $f:\mathbb{R}\times\mathbb{R}\times\mathbb{R}^n\to \mathbb{R};(t,u,p)\mapsto f(t,u,p)$ together with its first and second derivatives are almost periodic in $t$ uniformly for $(u,p)$ in any compact subset of $\mathbb{R}\times\mathbb{R}^n$. Such equation is ubiquitous throughout the modeling of population dynamics and population ecology. The almost periodicity of the nonlinearity $f$ captures the growth rate influenced by external effects which are roughly but not exactly periodic, or environmental forcing which exhibits different, non-commensurate periods.

In cases where $f$ is independent of $t$ (i.e., the autonomous case) or $f$ is time-periodic with period $T>0$ (i.e., the time $T$-periodic case), it has been known that stable equilibria or $T$-periodic solutions are not supposed to possess spatial variations on a convex domain. For instance, in terms of an autonomous equation on a convex domain $\Omega$ with $f$ being independent of $\nabla u$, Casten and Holland \cite{Casten} and Matano \cite{H.Matano1979} proved that any stable equilibrium is spatially-homogeneous (i.e., without any spatial structure). In other words, any spatially-inhomogeneous equilibrium on a convex domain must be unstable. Later, Hess \cite{P.Hess1987} considered the time $T$-periodic equation and showed that all stable $T$-periodic solutions are spatially-homogeneous on a convex domain $\Omega$.

When the system \eqref{main-equation} is driven by a time almost periodic forcing, there usually exist almost automorphic solutions rather than almost periodic ones. As a matter of fact, the appearance of almost automorphic dynamics is a fundamental phenomenon in almost periodically forced parabolic equations \cite{Shen2001,ShenYi95,Shen1998,SWZ,SWZ2}. We also refer to \cite{HY09,Johnson81,Johnson82,Obaya1,Obaya2,Ortega03,Yi04} on the study of almost automorphic dynamics in different types of almost-periodic differential systems. Among many others, Shen and Yi \cite{Shen1998} showed that any stable almost automorphic solution of \eqref{main-equation} is spatially-homogeneous on a convex domain $\Omega$.

Besides the convexity of the domain, the convexity or concavity of the nonlinearity $f$ in \eqref{main-equation} (i.e., the function $f(t,\cdot,\cdot):\mathbb{R}^{N+1}\to\mathbb{R}$ is convex or concave for all $t\in \mathbb{R}$) can be thought as an alternative condition which guarantees that any spatially-inhomogeneous equilibrium and time $T$-periodic solution are unstable in the autonomous case (Casten and Holland \cite{Casten}) and the time $T$-periodic case (Hess \cite{P.Hess1987}), respectively.

The present paper is mainly focusing on the almost periodically forced equation \eqref{main-equation}. We will show that, if $f(t,\cdot,\cdot):\mathbb{R}^{N+1}\to\mathbb{R}$ is convex or concave for all $t\in \mathbb{R}$, then any linearly stable almost automorphic solution $u(t,x)$ (see Definition \ref{linear-stable-solu}) of \eqref{main-equation} is spatially-homogeneous; and moreover, the frequency module of $u(t,x)$ is contained in that of $f$ (see Theorem \ref{main-result}).

Our result can be viewed as an effective supplement of the above-mentioned result in \cite{Shen1998}; for the concavity or convexity of $f$, instead of for convex domains.  It also generalizes to multi-frequency driven systems from that in the autonomous cases \cite{Casten} and time-periodic cases \cite{P.Hess1987}.

The paper is organized as follows. In Section 2, we review the basic notations and concepts involving skew-product semiflows, linearly stable and almost periodic (automorphic) functions which will be useful in our discussions. In Section 3, we prove the spatial-homogeneity of linearly stable almost automorphic solutions to \eqref{main-equation} under the assumption that the nonlinearity $f$ is concave or convex. 
\vskip 3mm

\section{Notations and Preliminary Results}
\subsection{Skew-product Semiflows and Linearly Stable Solutions}\label{semiflow1}

Let $Y$ be a compact metric space with metric $d_{Y}$ and $\mathbb{R}$ be the additive group of reals. A real {\it flow} $(Y,\mathbb{R})$ (or $(Y,\sigma)$) is a continuous mapping $\sigma:Y\times \mathbb{R}\to Y, (y,t)\mapsto y\cdot t$ satisfying: (i) $\sigma(y,0)=y$; (ii) $\sigma(\sigma(y,s),t)=\sigma(y,s+t)$ for all $y\in Y$ and $s,t\in\mathbb{R}$. A subset
$E\subset Y$ is {\it invariant} if $\sigma(y,t)\in E$ for each $y\in E$ and $t\in
\RR$, and is called {\it minimal} or {\it recurrent} if it is compact and the only non-empty compact invariant subset
of it is itself. By Zorn's Lemma, every compact and $\sigma$-invariant set contains a minimal subset. Moreover, a subset $E$ is minimal if and only if every trajectory is dense in $E$.

Let $X,Y$ be metric spaces and $(Y,\sigma)$ be a compact flow (called the base flow). Let also
$\RR^+=\{t\in \RR:t\ge 0\}$. A
skew-product semiflow $\Pi^t:X\times Y\rightarrow X\times Y$ is a semiflow of the following form
\begin{equation}\label{skew-product-semiflow}
\Pi^{t}(u,y)=(\varphi(t,u,y),y\cdot t),\quad t\geq 0,\, (u,y)\in X\times Y,
\end{equation}
satisfying (i) $\Pi^{0}={\rm Id}_X$ and (ii) the co-cycle property:
$\varphi(t+s,u,y)=\varphi(s,\varphi(t,u,y),y\cdot t)$ for each $(u,y)\in X\times Y$ and $s,t\in \RR^+$.
A subset $E\subset
X\times Y$ is {\it positively invariant} if $\Pi^t(E)\subset E$ for
all $t\in \RR^+$. The {\it forward orbit} of any $(u,y)\in X\times
Y$ is defined by $\mathcal{O}^+(u,y)=\{\Pi^t(u,y):t\ge 0\}$, and the
{\it $\omega$-limit set} of $(u,y)$ is defined by
$\omega(u,y)=\{(\widehat{u},\widehat{y})\in X\times Y:\Pi^{t_n}(u,y)\to
(\widehat{u},\widehat{y}) (n\to \infty) \textnormal{ for some sequence
}t_n\to \infty\}$.

A {\it flow extension} of a skew-product semiflow $\Pi^t$
is a continuous skew-product flow $\widehat{\Pi}^t$ such that $\widehat{\Pi}^t(u,y)=\Pi^t(u,y)$ for each $(u,y)\in X\times Y$ and $t\in \RR^+$. A compact positively invariant subset is said to admit {\it a flow extension} if the semiflow restricted to it does. Actually, a compact positively invariant set $K\subset X\times Y$ admits a flow extension if every point in $K$ admits a unique backward orbit which remains inside the
set $K$ (see \cite[part II]{Shen1998}). A compact positively invariant set $K\subset X\times Y$ for $\Pi^t$ is called {\it minimal} if it does not contain any other nonempty compact positively invariant set than itself.

Let $X$ be a Banach space and the cocycle $\varphi$ in \eqref{skew-product-semiflow} be $C^1$ for $u\in X$, that is, $\varphi$ is $C^{1}$ in $u$, and the derivative $\varphi_{u}$ is continuous in $u\in X$, $y\in Y,\,t>0$ ; and moreover, for any $v\in X$,
$$\varphi_{u}(t,u,y)v\rightarrow v \quad \textnormal{ as }\quad t\rightarrow 0^+,$$
uniformly for $(u,y)$ in compact subsets of $X\times Y$. Let $K\subset X\times Y$ be a
compact, positively invariant set which admits a flow extension. Define
$\Phi(t,u,y)=\varphi_{u}(t,u,y)$
for $(u,y)\in K,\,t\geq 0$. Then the operator $\Phi$ generates a linear skew-product semiflow $\Psi$ on $(X\times K,\mathbb{R}^{+})$ associated with \eqref{skew-product-semiflow} over $K$ as follows:
\begin{equation}\label{linearized-skew-product}
\Psi(t,v,(u,y))=(\Phi(t,u,y)v,\Pi^t(u,y)),\,\,t\geq 0,\,(u,y)\in K,\, v\in X.
\end{equation}
For each $(u,y)\in K$, define the Lyapunov exponent
$\lambda(u,y)=\limsup\limits_{t \to \infty }{\frac{\ln||\Phi(t,u,y)||}{t}},$ where
$\norm{\cdot}$ is the operator norm of $\Phi(t,u,y)$.
We call the number $\lambda_{K}={\sup}_{(u,y)\in K}\lambda(u,y)$ the {\it upper
Lyapunov exponent} on $K$.
\begin{definition}
{\rm $K$ is said to be {\it linearly stable} if $\lambda_{K}\leq 0$.}
\end{definition}

To carry out our study for the non-autonomous system \eqref{main-equation}, we embed it into a skew-product semiflow. Let $f_{\tau}(t,u,p)=f(t+\tau,u,p)(\tau \in \RR)$ be the time-translation of $f$, then the function $f$ generates a family $\{f_{\tau}|\tau \in \mathbb{R}\}$ in the space of continuous functions $C(\mathbb{R}\times \mathbb{R} \times \mathbb{R}^n,\mathbb{R})$ equipped with the compact open topology. Moreover, $H(f)$ (the closure of $\{f_{\tau}|\tau\in \mathbb{R}\}$ in the compact open topology) called the hull of $f$ is a compact metric space and every $g\in H(f)$ has the same regularity as $f$. Hence, the time-translation $g\cdot t\equiv g_{t}\,(g\in H(f))$ naturally defines a compact minimal flow on $H(f)$ and equation \eqref{main-equation} induces a family of equations associated to each $g\in H(f)$,
\begin{equation}\label{induced-equation}
\begin{split}
& u_{t}=\Delta u+g(t,u,\nabla u),\,\,\quad t>0,\quad x\in \Omega, \\
& \frac{\partial u}{\partial n}=0 \quad \text{on } \mathbb{R}^+\times \partial\Omega.
\end{split}
\end{equation}
It follows from the standard theory of parabolic equations (see, e.g. \cite{Friedman} ), for each $u_0\in C^1(\overline\Omega)$ satisfying $\frac{\partial u_0}{\partial n}$ on $\partial \Omega$, \eqref{induced-equation} admits a unique classical locally solution $\varphi(t,\cdot;u_0,g)$ with $\varphi(0,\cdot;u_0,g)=u_0$.

Hereafter, we always assume that $X$ is a fractional power space (see \cite{Hen}) associated with the operator $u\to - \Delta u$, $\mathcal {D}\to L^p(\Omega)$ such that $X\hookrightarrow C^1(\overline\Omega)$ ($X$ is compact embedded in $C^1(\overline\Omega)$), where $\mathcal{D}=\{u|u\in W^{2,p}(\Omega)\ \text{and} \ \frac{\partial u}{\partial n}|_{\partial \Omega}=0\}$, $p>n$.  For any $u\in X$ and $g\in H(f)$, \eqref{induced-equation} defines (locally) a unique solution $\varphi(t,\cdot;u,g)$ in $X$ is $C^2$ in $u$ and is continuous in $g$ and $t$ within its (time) interval of existence. In the language of dynamic systems, there is a well defined (local) skew-product semiflow $\Pi^t: X\times H(f)\to X\times H(f):$
\begin{equation}\label{semiflow}
\Pi^{t}(u,g)=(\varphi(t,\cdot;u,g),g\cdot t),\quad t>0
\end{equation}
associated with \eqref{induced-equation}. By the standard a {\it priori} estimates for parabolic equations (see \cite{Friedman,Hen}), if $\varphi(t,\cdot;u,g) (u\in X)$ is bounded in $X$ in the existence interval of the solution, then it is a globally defined classical solution. For any $\delta>0$, $\{\varphi(t,\cdot;u,g)\}$ is relatively compact, hence the $\omega$-limit set $\omega(u,g)$ is a nonempty connected compact subset of $X\times H(f)$. Moreover, by \cite{hale1988asymptotic,Hen}, $\Pi^t$ restricted to $\omega(u,g)$ is a (global) semiflow which admits a flow extension.

Let $X^+=\left\{u\in X|u(x)\geq 0, x\in \overline \Omega\right\}$. Denote by ${\rm Int} X^+$ the interior of $X^+$. Clearly, ${\rm Int} X^+\neq\emptyset$, since $\{u\in X|u(x)>0 \ \text{for $x\in \Omega$, $\frac{\partial u}{\partial n}<0$ for $x\in \partial \Omega$}\}\subset {\rm Int} X^+$. Thus, $X^+$ defines a strong ordering on $X$ as follows:
\begin{equation*}
\begin{split}
u_1\leq u_2 &\Longleftrightarrow u_2-u_1\in X^+, \\
u_1<u_2     &\Longleftrightarrow u_2-u_1\in X^+,\  u_2\neq u_1,\\
u_1\ll u_2  &\Longleftrightarrow u_2-u_1\in {\rm Int} X^+.
\end{split}
\end{equation*}
Immediately, we have the following lemma from \cite[Lemma III. 5.1]{Shen1998}.
\begin{lemma}\label{strongly-monotone}
The skew-product semiflow $\Pi^t$ in \eqref{semiflow} is strongly monotone, in the sense that: for any $(u,g)\in X\times H(f)$, $v\in X$ with $v>0$, one has $\Phi(t,u,g)v\gg 0$ for $t>0$.
\end{lemma}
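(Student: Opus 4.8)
The plan is to identify $w(t,\cdot):=\Phi(t,u,g)v=\varphi_u(t,u,g)v$ as the solution of the variational (linearized) equation along the orbit $\varphi(\,\cdot\,;u,g)$ and then to apply the strong parabolic maximum principle together with the Hopf boundary-point lemma, which is the classical route (and the one behind \cite[Lemma III.5.1]{Shen1998}). Since $\Pi^t$ is $C^1$ in $u$, differentiating \eqref{induced-equation} in the initial datum shows that, on the interval of existence of $\varphi(\,\cdot\,;u,g)$, the function $w=w(t,x)$ solves the linear problem
\begin{equation*}
w_t=\Delta w+a(t,x)\,w+b(t,x)\cdot\nabla w,\qquad \frac{\partial w}{\partial n}=0\ \text{ on }\ \partial\Omega,\qquad w(0,\cdot)=v,
\end{equation*}
with $a(t,x)=g_u(t,\varphi(t,x;u,g),\nabla\varphi(t,x;u,g))$ and $b(t,x)=g_p(t,\varphi(t,x;u,g),\nabla\varphi(t,x;u,g))$. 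Since $u\in X\hookrightarrow C^1(\overline\Omega)$, $\varphi$ is a classical solution, and every $g\in H(f)$ is $C^2$, the coefficients $a,b$ are continuous, hence bounded, on $[\delta,T]\times\overline\Omega$ for every compact time interval; moreover $v\in C^1(\overline\Omega)$ with $v\ge0$, $v\not\equiv0$, and parabolic regularity makes $w$ smooth enough up to $\partial\Omega$ for $t>0$ to run the boundary argument.

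Next I would apply the maximum principle. By the weak maximum principle $w\ge0$ throughout its domain; by the strong maximum principle for parabolic operators with a bounded zeroth-order coefficient --- after the standard reduction $\widetilde w=e^{-\lambda t}w$ with $\lambda$ large to render that coefficient nonpositive --- if $w$ vanished at an interior point $(t_0,x_0)$ with $t_0>0$ then $w\equiv 0$ for all $t\le t_0$, contradicting $w(0,\cdot)=v\not\equiv0$. Hence $w(t,x)>0$ for all $x\in\Omega$ and $t>0$. To push strict positivity to the boundary, suppose $w(t_0,x_0)=0$ for some $x_0\in\partial\Omega$ and $t_0>0$; since $\Omega$ is smooth (so the interior sphere condition holds) and $w>0$ in the parabolic interior, the Hopf boundary-point lemma forces $\frac{\partial w}{\partial n}(t_0,x_0)<0$, which is incompatible with the Neumann condition $\frac{\partial w}{\partial n}=0$. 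Therefore $w(t,x)>0$ for every $x\in\overline\Omega$ and every $t>0$.

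It remains to read this as the order relation $w(t,\cdot)\gg0$, i.e.\ $w(t,\cdot)\in\mathrm{Int}\,X^+$. Because $X$ is compactly embedded in $C^1(\overline\Omega)$, any element of $X$ that is strictly positive on the compact set $\overline\Omega$ admits an $X$-neighborhood of functions still nonnegative on $\overline\Omega$, hence lies in $\mathrm{Int}\,X^+$; applying this to $w(t,\cdot)$ for $t>0$ yields $\Phi(t,u,g)v\gg0$, as claimed. There is no essential difficulty here: the only points genuinely requiring care are the boundary analysis --- checking that the hypotheses of the Hopf lemma hold ($\partial\Omega$ smooth, $w$ regular up to the boundary, coefficients bounded) so that the Neumann condition can be contradicted --- and the identification of $\mathrm{Int}\,X^+$ through the $C^1$-embedding, which is precisely what makes strict positivity on $\overline\Omega$ equivalent to the relation $\gg0$.
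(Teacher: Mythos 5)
Your proof is correct. The paper itself gives no argument for this lemma --- it simply imports it from \cite[Lemma III.5.1]{Shen1998} --- so your write-up supplies exactly the standard reasoning behind that citation: realize $\Phi(t,u,g)v$ as the solution of the variational equation along the orbit $\varphi(\cdot;u,g)$, apply the weak and strong parabolic maximum principles (after the usual $e^{-\lambda t}$ reduction of the zeroth-order coefficient) to get interior positivity, invoke the Hopf boundary-point lemma against the homogeneous Neumann condition to push positivity up to $\partial\Omega$, and translate strict positivity on $\overline\Omega$ into membership in ${\rm Int}\,X^+$ via the embedding $X\hookrightarrow C^1(\overline\Omega)$. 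All steps check out, including the regularity needed for the Hopf lemma (Schauder theory applies since the coefficients, built from $\nabla\varphi$, are H\"older continuous for $t>0$). The only wording I would tighten is the closing ``equivalence'': for the lemma you only need the implication that strict positivity on the compact set $\overline\Omega$ forces $w(t,\cdot)\in{\rm Int}\,X^+$, which your neighborhood argument (a $C^0$-ball of radius $\min_{\overline\Omega}w(t,\cdot)$ stays in $X^+$, and the $X$-norm dominates the $C^0$-norm) establishes correctly; the converse, while true here because constants belong to $X$, is not needed.
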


\begin{definition}\label{linear-stable-solu}
{\rm  A bounded solution $u(t,x)=\varphi(t,x;u_0,g)$ of \eqref{induced-equation}($u_0\in X$) is {\it linearly stable} if it satisfies the following conditions:

(i) $\omega(u_0,g)$ is linearly stable.

(ii) Let $\Phi(t,s)$ ($t\geq s\geq 0$) be the solution operator of the following linearized equation along $u(t,x)$:
\begin{equation}
\begin{split}
& v_t=\Delta v+g_u(t,u,\nabla u)v+g_p(t,u,\nabla u)\nabla v\quad \text{in } \mathbb{R}^+\times \Omega, \\
& \frac{\partial v}{\partial n}=0 \quad \text{on } \mathbb{R}^+\times \partial\Omega.
\end{split}
\end{equation}
Then $\sup_{t\geq 0}\|\Phi(t,0)v_0\|<\infty$ for all $v_0\in X$.
}
\end{definition}

\subsection{Almost Periodic and Almost Automorphic Functions}

In this subsection, we always assume $D$ is a non-empty subset of $\RR^m$.
\begin{definition}\label{admissible}
A continuous function $f:\RR\times D \to\RR$ is said to be {\it
admissible} if for any compact subset $K\subset D$, $f$ is bounded and uniformly continuous on
$\RR\times K$. $f$ is $C^r$ ($r\ge 1$) {\it admissible} if $f$ is $C^r$ in $w\in D$ and Lipschitz in $t$, and $f$ as well as its partial derivatives to order $r$ are admissible.
\end{definition}

Let $f\in C(\RR\times D,\RR) (D\subset \RR^m)$ be admissible. Then
$H(f)={\rm cl}\{f\cdot\tau:\tau\in \RR\}$ (called the {\it hull of
$f$}) is compact and metrizable under the compact open topology (see \cite{Sell,Shen1998}), where $f\cdot\tau(t,\cdot)=f(t+\tau,\cdot)$. Moreover, the time translation $g\cdot t$ of $g\in H(f)$ induces a natural
flow on $H(f)$ (cf. \cite{Sell}).
\begin{definition}{\rm
A function $f\in C(\RR,\RR)$ is {\it almost automorphic} if for every $\{t'_k\}\subset\mathbb{R}$ there is a subsequence $\{t_k\}$
and a function $g:\mathbb{R}\to \mathbb{R}$ such that $f(t+t_k)\to g(t)$ and $g(t-t_k)\to f(t)$ pointwise. $f$ is {\it almost periodic} if for any sequence $\{t'_n\}$ there is a subsequence $\{t_n\}$ such that $\{f(t+t_n)\}$ converges uniformly. A function $f\in C(\RR\times D,\RR)(D\subset \RR^m)$ is {\it uniformly almost periodic (automorphic)} {\it in $t$}, if $f$ is both admissible and almost periodic (automorphic) in $t\in \RR$.}

\end{definition}

\begin{remark}\label{a-p-to-minial}
{\rm  If $f$ is a uniformly almost automorphic function in $t$, then $H(f)$ is always {\it minimal}, and there is a residual set $Y'\subset H(f)$, such that all $g\in Y'$ is a uniformly almost automorphic function in $t$. If $f$ is a uniformly almost periodic function in $t$, then $H(f)$ is always {\it minimal}, and every $g\in H(f)$ is uniformly almost periodic function (see, e.g. \cite{Shen1998}).}
\end{remark}
Let $f\in C(\mathbb{R}\times D,\mathbb{R})$ be uniformly almost periodic (almost automorphic) and
\begin{equation}\label{Fourier}
f(t,w)\sim \sum_{\lambda\in \mathbb{R}}a_{\lambda}(w)e^{i\lambda t}
\end{equation}
be a Fourier series of $f$ (see \cite{Shen1998,Veech1965} for the definition and the existence of a Fourier series). Then $\mathcal{S}=\{\a_{\lambda}(w)\not\equiv 0\}$ is called the Fourier spectrum of $f$ associated with Fourier series \eqref{Fourier} and $\mathcal{M}$ be the smallest additive subgroup of $\mathbb{R}$ containing $\mathcal{S}(f)$ is called the frequency module of $f$. Moreover, $\mathcal{M}(f)$ is a countable subset of $\mathbb{R}$ (see, e.g.\cite{Shen1998}).

\begin{lemma}\label{module}
Assume $f\in C(\mathbb{R}\times D,\mathbb{R})$ is a uniformly almost automorphic function, then for any uniformly almost automorphic function $g\in H(f)$, $\mathcal{M}(g)=\mathcal{M}(f)$.
\end{lemma}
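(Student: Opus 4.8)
The plan is to reduce the statement to a one‑sided module‑containment property by exploiting the minimality of the hull. Since $f$ is uniformly almost automorphic, Remark \ref{a-p-to-minial} guarantees that the translation flow on $H(f)$ is minimal. If $g\in H(f)$, then $H(g)=\mathrm{cl}\{g\cdot\tau:\tau\in\mathbb{R}\}$ is a nonempty, compact, flow‑invariant subset of $H(f)$, so by minimality $H(g)=H(f)$; in particular $f\in H(g)$. Thus $f$ and $g$ each lie in the other's hull, and it suffices to prove the one‑sided claim: whenever $\psi$ and $h$ are uniformly almost automorphic with $h\in H(\psi)$, one has $\mathcal{M}(h)\subseteq\mathcal{M}(\psi)$. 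Applying this with $(\psi,h)=(f,g)$ and then with $(\psi,h)=(g,f)$ gives $\mathcal{M}(g)=\mathcal{M}(f)$. Note that the hypothesis that $g$ itself be almost automorphic is genuinely needed: in the almost automorphic (as opposed to almost periodic) case not every element of $H(f)$ enjoys this property, and $\mathcal{M}(h)$ is defined only for those $h$ that admit a Fourier series.

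For the one‑sided containment I would argue at the level of Fourier coefficients and mean values. Recall (see \cite{Veech1965,Shen1998}) that a uniformly almost automorphic $\psi$ has a well‑defined, translation‑invariant mean $M(\psi)=\lim_{T\to\infty}\frac{1}{2T}\int_{-T}^{T}\psi(t)\,dt$, that its Fourier coefficient attached to $\lambda$ is $a_{\lambda}(\psi)=M\big(\psi(\cdot)e^{-i\lambda\cdot}\big)$, and that $\mathcal{M}(\psi)$ is the subgroup of $\mathbb{R}$ generated by $\{\lambda:a_{\lambda}(\psi)\neq 0\}$. Choose a sequence $t_{k}$ with $\psi\cdot t_{k}\to h$ in $H(\psi)$, i.e. $\psi(t+t_{k})\to h(t)$ uniformly on compact $t$‑sets. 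Then for each fixed $k$,
\[
M_{t}\big(\psi(t+t_{k})e^{-i\lambda t}\big)=e^{i\lambda t_{k}}\,M_{t}\big(\psi(t+t_{k})e^{-i\lambda(t+t_{k})}\big)=e^{i\lambda t_{k}}\,a_{\lambda}(\psi),
\]
using translation invariance of the mean. Hence, if $\lambda\notin\mathcal{M}(\psi)$ then $a_{\lambda}(\psi)=0$, the left‑hand side vanishes for every $k$, and passing to the limit $k\to\infty$ should yield $a_{\lambda}(h)=0$; so $\lambda\notin\mathcal{M}(h)$, which is precisely $\mathcal{M}(h)\subseteq\mathcal{M}(\psi)$. (The same computation in fact shows $|a_{\lambda}(h)|=|a_{\lambda}(\psi)|$, so the spectra, and hence the modules, coincide.)

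The main obstacle is exactly the last passage to the limit, because convergence in the hull of an almost automorphic function is only locally uniform, not globally uniform as in the almost periodic case, so one cannot naively interchange $\lim_{k}$ with the limit $T\to\infty$ defining the mean. I expect to resolve this with two ingredients. First, the family $\{\psi\cdot t_{k}\}$ is uniformly bounded (each translate has the same sup norm as $\psi$, and $\norminfty{h}\le\norminfty{\psi}$), so for every fixed $T$ dominated convergence gives $\frac{1}{2T}\int_{-T}^{T}\psi(t+t_{k})e^{-i\lambda t}\,dt\to\frac{1}{2T}\int_{-T}^{T}h(t)e^{-i\lambda t}\,dt$. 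Second, one needs enough uniformity in $k$ (equivalently, in the translation parameter) of the ergodic averages of $\psi$ to exchange the two limits; this is the delicate point. A clean way around it is the structural route: pass to the maximal equicontinuous factor $(\overline{H(f)},\mathbb{R})$ of the minimal hull flow, whose character group, realized inside $\widehat{\mathbb{R}}=\mathbb{R}$, is exactly $\mathcal{M}(f)$; since $H(f)=H(g)$ as $\mathbb{R}$‑flows they share the same maximal equicontinuous factor, and therefore $\mathcal{M}(f)=\mathcal{M}(g)$ at once. Either the mean‑value computation above, once the limit exchange is justified, or this factor argument completes the proof of Lemma \ref{module}.
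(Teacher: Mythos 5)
Your reduction via minimality is correct and worth keeping: since $H(f)$ is minimal (Remark \ref{a-p-to-minial}), $H(g)$ is a nonempty compact invariant subset and hence equals $H(f)$, so $f\in H(g)$ and the lemma does follow from the one-sided containment $\mathcal{M}(h)\subseteq\mathcal{M}(\psi)$ for almost automorphic $h\in H(\psi)$. Note, however, that the paper does not prove this lemma at all; its proof is a one-line citation to \cite[Corollary I.3.7]{Shen1998}, so anything you write here is necessarily a different (longer) route than the paper's.

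Of your two proposed routes, the mean-value computation has a genuine gap, and it is exactly the one you flag: locally uniform convergence $\psi(\cdot+t_k)\to h$ does not let you interchange $\lim_k$ with the Bohr mean $\lim_{T\to\infty}\frac{1}{2T}\int_{-T}^{T}$, because the ergodic averages of the translates need not converge uniformly in $k$. This is not a technicality one can patch with boundedness and dominated convergence; the failure of uniform-in-translate convergence of these averages is precisely what separates almost automorphic from almost periodic functions, and the interchange is essentially equivalent to the statement being proved. Your fallback via the maximal equicontinuous (almost periodic) factor of the minimal flow $(H(f),\mathbb{R})$ is the correct and standard argument --- it is in substance how Veech \cite{Veech1965} and Shen--Yi \cite{Shen1998} obtain the result --- but as written it silently invokes the identification of $\mathcal{M}(\psi)$ with the character group of that factor realized in $\widehat{\mathbb{R}}\cong\mathbb{R}$, which is itself the main theorem of Veech's theory of Fourier analysis for almost automorphic functions. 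So the proposal is acceptable only if that identification is either proved or explicitly cited (e.g.\ to \cite{Veech1965} or \cite[Section I.3]{Shen1998}); without it, the hard part of the lemma has been deferred rather than established. A minor further caution: your parenthetical claim that $\lvert a_{\lambda}(h)\rvert=\lvert a_{\lambda}(\psi)\rvert$ rests on the same unjustified limit interchange and should be dropped; only the equality of the spectra (hence of the modules) is needed and is what the factor argument delivers.
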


\begin{proof}
See \cite[Corollary I.3.7]{Shen1998}.
\end{proof}
\section{Spatial-homogeneity of Linearly Stable Solutions}
In this section, we always assume that the function $(u,p)\mapsto f(t,u,p)$ in \eqref{main-equation} is concave (resp. convex) for each $t\in \mathbb{R}$, that is, $f(t,\lambda u_1 +(1-\lambda)u_2, \lambda p_1+(1-\lambda)p_2)\ge\, (\text{resp.} \leq) \,\,\lambda f(t,u_1,p_1)+(1-\lambda)f(t,u_2,p_2)$ for any $\lambda\in [0,1]$, $t\in \mathbb{R}$ and $(u_i,p_i)\in \mathbb{R}\times\mathbb{R}^n,i=1,2$. Clearly, $g(t,u,p)$ is also concave (resp. convex) for any $g\in H(f)$. We further assume that $f(t,\cdot,\cdot)$ is $C^2$ uniformly almost periodic. Our main result is the following theorem
\begin{theorem}\label{main-result}
Assume that $f:(t,\cdot,\cdot)\mapsto f(t,\cdot,\cdot)$ is concave {\rm (}or convex{\rm)}. Let $\varphi(t,\cdot,u_0,g)\in C^{1+\frac{\mu}{2},2+\mu}(\mathbb{R}\times \overline\Omega)$ {\rm ($\mu\in(0,1]$)} be a linearly stable almost automorphic {\rm(}almost periodic{\rm)} solution of \eqref{induced-equation}, then $\varphi(t,\cdot;u_0,g)$ is spatially-homogeneous and is a solution of
\begin{equation}\label{ODE}
u'=g(t,u,0).
\end{equation}
Moreover, $\mathcal{M}(\varphi)\subset \mathcal{M}(f)$.
\end{theorem}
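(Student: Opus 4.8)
I will argue by contradiction. Suppose $u:=\varphi(\cdot,\cdot;u_0,g)$ is \emph{not} spatially homogeneous, and write $\phi(t)=\max_{x\in\overline{\Omega}}u(t,x)$, $\psi(t)=\min_{x\in\overline{\Omega}}u(t,x)$. The first reduction is that it suffices to produce a single $t_0$ with $u(t_0,\cdot)$ constant in $x$. Indeed, if $u(t_0,\cdot)\equiv c$, let $a$ solve \eqref{ODE} with $a(t_0)=c$; then $v:=u-a$ satisfies, under the Neumann condition, a \emph{linear} parabolic equation $v_t=\Delta v+\widetilde c(t,x)v+\widetilde b(t,x)\cdot\nabla v$, where $\widetilde c,\widetilde b$ are the bounded Hölder coefficients obtained from $g(t,u,\nabla u)-g(t,a,0)$ by the mean value theorem along the segment joining $(a,0)$ to $(u,\nabla u)$, and $v(t_0,\cdot)\equiv 0$. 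Forward and backward uniqueness for linear parabolic equations then force $v\equiv 0$, i.e. $u\equiv a$ on $\RR$ (and $a$ is global since $u$ is bounded) and $u$ solves \eqref{ODE}. Since $u$ is then a linearly stable almost automorphic (almost periodic) solution of the monotone skew-product semiflow generated by \eqref{ODE} over $H(f)$, the inclusion $\M(\varphi)\subset\M(g)=\M(f)$ follows from the module-containment theorem for linearly stable almost automorphic solutions of monotone skew-product semiflows (Shen and Yi \cite{Shen1998}) together with Lemma \ref{module}. By the contrapositive of the reduction, our standing assumption means $\mathrm{osc}(t):=\phi(t)-\psi(t)>0$ for \emph{every} $t$, so it is enough to reach $\mathrm{osc}(t_0)=0$ for some $t_0$.

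\textbf{Comparison with spatially homogeneous solutions, and the concavity device.} At any point of $\overline{\Omega}$ where $u(t,\cdot)$ attains its maximum (resp. minimum) one has $\nabla u=0$ and $\Delta u\le 0$ (resp. $\ge 0$) — using the Neumann condition together with the Hopf boundary point lemma at boundary extrema — so the parabolic maximum principle shows that $\phi$ is a subsolution and $\psi$ a supersolution of \eqref{ODE} on $\RR$, with $\psi\le\phi$. Since $g(t,\cdot,0)$ is concave, $g_u(t,\cdot,0)$ is nonincreasing, hence
\[
(\phi-\psi)'(t)\le g(t,\phi(t),0)-g(t,\psi(t),0)\le g_u(t,\psi(t),0)\,(\phi(t)-\psi(t)),
\]
so that $\mathrm{osc}(t)\le \mathrm{osc}(0)\exp\!\big(\int_0^t g_u(s,\psi(s),0)\,ds\big)$; note that $g_u(t,\psi(t),0)$ and $g_u(t,\phi(t),0)$ are genuine values of the coefficient $g_u(t,u(t,\cdot),\nabla u(t,\cdot))$ of the equation linearized along $u$, attained exactly at the points where $\nabla u$ vanishes. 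On the other hand, if $\zeta$ is any solution of that linearized equation, the tangent plane inequality for the concave function $g(t,\cdot,\cdot)$ gives, for every $s\in\RR$,
\[
\partial_t(u+s\zeta)\ \ge\ \Delta(u+s\zeta)+g\big(t,u+s\zeta,\nabla(u+s\zeta)\big),
\]
so $u+s\zeta$ is a supersolution of \eqref{induced-equation}. Taking $\zeta=h$, the solution of the linearized equation with $h(0,\cdot)\equiv 1$ (so $h\gg 0$ for $t>0$ by Lemma \ref{strongly-monotone}, and $0<h\le C$ by linear stability), this yields $\varphi(t,\cdot;u_0+s\,h(0,\cdot),g)\le u(t,\cdot)+s\,h(t,\cdot)$ for all $s\in\RR$, while monotonicity against the spatially constant solutions $\alpha(\cdot;\cdot,g)$ of \eqref{ODE} supplies matching lower bounds; and the difference $\varphi(t,\cdot;u_0+s\,h(0,\cdot),g)-u$ is itself a sub/supersolution of the linearization along $u$, again by concavity.

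\textbf{The crux.} What remains — and what I expect to be the main obstacle — is to turn the above into $\mathrm{osc}\equiv 0$. Linear stability controls $\Phi(t,0)$ for the $x$-dependent linearization along $u$ (in particular $\limsup_{t\to\infty}\frac1t\ln\|\Phi(t,0)\|=\lambda_M\le 0$, where $M=\omega(u_0,g)$), whereas the oscillation estimate involves the ``frozen'' coefficients $g_u(t,\psi(t),0)$; the key point I would establish is $\limsup_{t\to\infty}\frac1t\int_0^t g_u(s,\psi(s),0)\,ds\le\lambda_M\le0$. I would prove this using the generalized principal Floquet bundle of the strongly monotone linear skew-product over the minimal set $M$ (as in Shen and Yi \cite{Shen1998}), comparing it, at the extremal points of $u(t,\cdot)$ where $\nabla u=0$, with the spatially homogeneous principal solution $\exp\int_0^t g_u(s,\psi(s),0)\,ds$ of the linearization along $\psi$. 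Combined with recurrence of the almost automorphic solution — choose $t_n\to\infty$ with $\Pi^{t_n}(u_0,g)\to(u_0,g)$, so $\mathrm{osc}(t_n)\to\mathrm{osc}(0)$ because $X\hookrightarrow C^1(\overline{\Omega})$ — the bound $\mathrm{osc}(t_n)\le\mathrm{osc}(0)\exp\int_0^{t_n}g_u(s,\psi(s),0)\,ds$ forces $\mathrm{osc}(0)=0$ whenever the exponent is negative along $(t_n)$; the borderline case $\lambda_M=0$ is precisely where the concavity device of the previous step is used, since then $u+s\,h$ being a supersolution that is absorbed toward $u$ forces the Floquet direction $h$, hence $u$, to be $x$-independent. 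A cleaner alternative route is to observe that $\Pi^t$ is strongly monotone (Lemma \ref{strongly-monotone}) and concave (a convex combination of solutions of \eqref{induced-equation} is a subsolution, by concavity of $g$), that the spatially homogeneous solutions form an invariant sub-semiflow which dominates $M$ from above and below through the sub/supersolution property of $\max_x u$ and $\min_x u$, and then to invoke the classification of linearly stable minimal sets for monotone and concave skew-product semiflows (cf. \cite{Obaya1,Obaya2}) to conclude that $M$ is contained in this homogeneous sub-semiflow, i.e. $u$ is spatially homogeneous. The convex case follows by replacing $u$ by $-u$ and $f$ by $-f$, and the almost periodic case is the same argument with uniform recurrence in place of \eqref{ODE}-recurrence.
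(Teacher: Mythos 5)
Your proposal correctly sets up the comparison with the spatially homogeneous dynamics ($\phi=\max_x u$ is a subsolution and $\psi=\min_x u$ a supersolution of \eqref{ODE}, and concavity gives the Gronwall bound $\mathrm{osc}(t)\le\mathrm{osc}(0)\exp\int_0^t g_u(s,\psi(s),0)\,ds$), and the reduction ``one homogeneous time slice implies homogeneity for all $t$'' is sound. But the argument has a genuine gap exactly at the step you yourself flag as the crux, and neither of the two routes you offer closes it. First, the inequality $\limsup_{t\to\infty}\frac1t\int_0^t g_u(s,\psi(s),0)\,ds\le\lambda_M$ is only asserted (``I would prove this using the generalized principal Floquet bundle\ldots''); the comparison between the $x$-dependent principal bundle over $\omega(u_0,g)$ and the scalar exponential $\exp\int_0^t g_u(s,\psi(s),0)\,ds$ built from coefficients frozen at the (moving, possibly non-unique) extremal points is precisely the hard analytic content, and no mechanism for it is given. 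Second, even granting that inequality, linear stability only yields $\lambda_M\le 0$, so you obtain $\limsup\frac1t\int_0^{t_n}g_u\le 0$, i.e.\ subexponential growth of the right-hand side; combined with $\mathrm{osc}(t_n)\to\mathrm{osc}(0)$ along a recurrence sequence this gives no contradiction unless the integral actually tends to $-\infty$ (or at least stays strictly negative), which is exactly the borderline case $\lambda_M=0$ that a nontrivial linearly stable recurrent solution does not exclude. Your treatment of that case (``$u+sh$ being a supersolution that is absorbed toward $u$ forces the Floquet direction $h$, hence $u$, to be $x$-independent'') is not an argument. The ``cleaner alternative'' fares no better: the classification results in \cite{Obaya1,Obaya2} concern scalar convex ODEs and functional differential equations, not strongly monotone parabolic skew-products on a domain, and no classification theorem in the form you need is available to be invoked.

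For contrast, the paper avoids the frozen-coefficient comparison entirely. It works with $w=c(t)-u$ in the state space: concavity gives the differential inequality $w_t+A(t)w\le q(t):=c'(t)-g(t,c,0)$ with $A(t)$ the linearization along $u$ itself, and a variation-of-constants/positivity argument splits into two cases. If $q>0$ on a set of positive measure, strong positivity of the fundamental solution forces $c-u\gg 0$ at a later time, contradicting that $w$ vanishes at a maximum point. If $q\le 0$ a.e., then $w(t)\le\Phi(t,0)w(0)$, and the continuous separation $X=X_1\oplus X_2$ (principal Floquet bundle plus exponential separation) together with recurrence of the almost automorphic minimal set pins down the limit of $\Phi(t_n,0)v_1$ along returning times; one concludes $w(0)\le a v_1$, hence $w(0)\in X_1$, hence $w(0)=0$ because $w(0)\notin\mathrm{Int}\,X^+$. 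This uses linear stability only through boundedness of $\Phi(t,0)$ on the principal direction, and it is precisely the closedness of the limit set of $\{\Phi(t,0)v_1\}$ and the identity $a^*=1$ that replace the quantitative exponent comparison your sketch leaves open. If you want to salvage your route, you would need to prove the Floquet-bundle comparison at the extremal points rigorously and then handle $\lambda_M=0$ by an argument of this positivity/recurrence type rather than by a decay estimate.
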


Hereafter, we only consider the case when $f$ is concave, because by a transformation from $u$ to $-u$, the convexity of nonlinearity $g$ can be changed into concavity.

Let $\varphi(t,\cdot;u_0,g)\in C^{1+\frac{\mu}{2},2+\mu}(\mathbb{R}\times \overline\Omega)$ be an almost automorphic solution of \eqref{induced-equation} with $u(0)=u_0$. Then, $\omega(u_0,g)$ is an almost automorphic minimal set; and hence, $\varphi(t,x;u_0,g)$ is well defined for all $t\in\mathbb{R}$. For brevity, we write $u(t,x)=\varphi(t,x;u_0,g)$ and define the following function $c:\mathbb{R}\to \mathbb{R}$ by
\[
c(t):=\max_{x\in\overline\Omega} u(t,x), \ t\in\mathbb{R}.
\]
Let $M(t)=\{x\in \overline\Omega: u(t,x)=c(t)\}$. Then, similar as the arguments in \cite[p.327]{P.Hess1987}, $c(t)$ is a Lipchitz continuous function and hence differentiable for a.e. $t\in \mathbb{R}$; define $ \mathbb{\widetilde R}=\{t\in\mathbb{R}| c(t)\ \text{is differentiable}\}$, then $\mathbb{R}\setminus\widetilde{\mathbb{R}}$ is a set of zero measure and $c'(t)$ is continuous on $\mathbb{\widetilde R}$; and moreover, $c'(t)=u_t(t,x)$ for any $t\in\mathbb{\widetilde R}$ and $x\in M(t)$. Since $u\in C^{1+\frac{\mu}{2},2+\mu}(\mathbb{R}\times \overline\Omega)$ is an almost automporhic solution of \eqref{induced-equation}, $c'(t)\in L^\infty(\mathbb R)$. Moreover, we have the following
\begin{lemma}
$c(t)$ is an almost automorphic function.
\end{lemma}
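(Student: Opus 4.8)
The plan is to express $c(t)$ as the composition of the solution, viewed as a Banach-space valued almost automorphic function, with a fixed Lipschitz functional on $C(\overline\Omega)$, and then invoke the fact that almost automorphy is preserved under such compositions.

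First I would record the form of the hypothesis that is actually used. Since $\varphi(\cdot,\cdot;u_0,g)$ is an almost automorphic solution and $\omega(u_0,g)$ is an almost automorphic minimal set that admits a flow extension, the curve $U:\mathbb{R}\to X$, $U(t):=\varphi(t,\cdot;u_0,g)$, is defined for all $t\in\mathbb{R}$ and is an $X$-valued almost automorphic function; that is, for every sequence $\{t_n'\}\subset\mathbb{R}$ there exist a subsequence $\{t_n\}$ and a function $V:\mathbb{R}\to X$ with $U(t+t_n)\to V(t)$ and $V(t-t_n)\to U(t)$ in $X$, for every $t\in\mathbb{R}$. The two-sided, backward-in-time convergence here is exactly what the flow extension on the compact minimal set $\omega(u_0,g)$ provides, together with the minimality of the base flow on $H(f)$.

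Next I would introduce the functional $\mathcal{P}:C(\overline\Omega)\to\mathbb{R}$, $\mathcal{P}(v)=\max_{x\in\overline\Omega}v(x)$, which satisfies $|\mathcal{P}(v_1)-\mathcal{P}(v_2)|\le \norm{v_1-v_2}_{C(\overline\Omega)}$; since $X\hookrightarrow C^1(\overline\Omega)\hookrightarrow C(\overline\Omega)$ continuously, $\mathcal{P}|_X$ is continuous, and by definition $c(t)=\mathcal{P}(U(t))$ for all $t$. Combining the two facts: given $\{t_n'\}$, choose $\{t_n\}$ and $V$ as above; continuity of $\mathcal{P}$ on $X$ then gives $c(t+t_n)=\mathcal{P}(U(t+t_n))\to\mathcal{P}(V(t))$ and $\mathcal{P}(V(t-t_n))\to\mathcal{P}(U(t))=c(t)$ for every $t\in\mathbb{R}$. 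Setting $h:=\mathcal{P}\circ V$, this is precisely the sequential definition of almost automorphy for $c$, so $c$ is almost automorphic; since $U(\mathbb{R})$ is bounded in $X$ and $\mathcal{P}$ is Lipschitz, $c$ is in addition bounded and uniformly continuous, consistent with the Lipschitz property already noted.

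The step I expect to be the real content is the first one --- verifying that the hypothesis ``$\varphi(\cdot,\cdot;u_0,g)$ is an almost automorphic solution'' truly yields the two-sided pointwise-$X$ convergence of $U(t\pm t_n)$ to and from a globally defined limit curve $V$. Once this is granted, the remainder is a soft continuity argument in which $\mathcal{P}$ could be replaced by any continuous functional; in particular the same reasoning shows that any continuous scalar observable of an almost automorphic solution is an almost automorphic function of $t$.
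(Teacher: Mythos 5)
Your proposal is correct and is essentially the paper's argument: the paper also starts from the two-sided convergence $u(t+t_{n_k},x)\to v(t,x)$, $v(t-t_{n_k},x)\to u(t,x)$ uniformly in $x\in\overline\Omega$ furnished by the uniform almost automorphy of $u$, and then passes to the maximum over $x$ via the inequality $\bigl|\max_{x}u_1(x)-\max_{x}u_2(x)\bigr|\le\max_{x}|u_1(x)-u_2(x)|$, which is exactly the Lipschitz property of your functional $\mathcal{P}$. Your phrasing as a composition of an $X$-valued almost automorphic curve with a continuous functional is just a mild abstraction of the same computation.
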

\begin{proof}
Note that $u(t,x)$ is a uniformly almost automorphic function on $\mathbb{R}\times \overline \Omega$. Then, for any sequence $t_n\to\infty$, there are $v(t,x)\in H(u)$ (the hull of $u$) and a subsequence $\{t_{n_k}\}\subset \{ t_n\}$, such that $u(t+t_{n_k},x)\to v(t,x)$ and $v(t-t_{n_k},x)\to u(t,x)$, uniformly for $(t,x)\in I\times\overline\Omega$, where $I$ is any compact set contained in $\mathbb{R}$. In other words, for any $\epsilon>0$, there exists some $N\in\mathbb{N}$ such that
\begin{equation*}
\left\{
\begin{aligned}
& u(t+t_{n_k},x)-\epsilon <v(t,x)<u(t+t_{n_k},x)+\epsilon\\
& v(t-t_{n_k},x)-\epsilon <u(t,x)<v(t-t_{n_k},x)+\epsilon,
\end{aligned}\right.
\end{equation*}
for any $k>N$ and $(t,x)\in I\times \overline\Omega$. Therefore,
\begin{equation*}
\left\{
\begin{aligned}
& \max_{x\in \overline\Omega} u(t+t_{n_k},x)-\epsilon <\max_{x\in \overline\Omega} v(t,x)<\max_{x\in \overline\Omega} u(t+t_{n_k},x)+\epsilon\\
& \max_{x\in \overline\Omega} v(t-t_{n_k},x)-\epsilon <\max_{x\in \overline\Omega} u(t,x)<\max_{x\in \overline\Omega} v(t-t_{n_k},x)+\epsilon,
\end{aligned}\right.
\end{equation*}
that is,
\begin{equation*}
|c(t+t_{n_k})-\max_{\overline\Omega}v(t,x)|<\epsilon\ \text{ and }\ |c(t)-\max_{\overline\Omega}v(t-t_{n_k},x)|<\epsilon,
\end{equation*}
for any $k>N$ and $t\in \mathbb{R}$. This implies that $c(t)$ is an almost automorphic function.
\end{proof}

Now, we are ready to prove Theorem \ref{main-result}.

\begin{proof}[Proof of Theorem \ref{main-result}]
Let $w(t,x)=c(t)-u(t,x)$. Then, it is clear that $w(t,x)$ is a uniformly almost automorphic function and $w(t,x)\geq 0$ on $\mathbb{R}\times \overline\Omega$. Since $u(t,x)$ is a solution of \eqref{induced-equation}, denote $-\Delta$ by $A$, we have
\begin{equation}\label{differen-equa}
w_t+Aw=c'(t)-u_t+\Delta u=c'(t)-g(t,u,\nabla u)
\end{equation}
for all $t\in\widetilde{\mathbb{R}}$. Since $g$ is concave,
\begin{equation}\label{l-inequality}
g(t,c,0)\leq g(t,u,\nabla u)+\frac{\partial g}{\partial u}(t,u,\nabla u)w+\sum^n_{i=1}\frac{\partial g}{\partial p_i}(t,u,\nabla u)w_{x_j}.
\end{equation}
Let $$A(t)=A-\sum^n_{i=1}\frac{\partial g}{\partial p_i}(t,u,\nabla u)\frac{\partial}{\partial x_i}-\frac{\partial g}{\partial u}(t,u,\nabla u).$$
Together with \eqref{differen-equa}-\eqref{l-inequality}, one has
\[
w_t+A(t)w\leq c'(t)-g(t,c,0):=q(t)
\]
for all $t\in \widetilde{\mathbb{R}}$.

Since $c'(t)\in L^\infty(\mathbb{R})$, one has $q\in L^\infty(\mathbb R)$. We now divide our proof into the following two cases: (i) $q(t)\leq 0$ for a.e. $t\in \mathbb{R}$; (ii) $q(t)>0$ on a set of positive measure.

Case (i). $q(t)\leq 0$ for a.e. $t\in \mathbb{R}$. Let $h\in L^{\infty}(\mathbb{R},L^p(\Omega))$ be defined from \eqref{differen-equa} by
\begin{equation}\label{linear-equation}
w_t+A(t)w=:h(t)
\end{equation}
and $\Phi(t,s)$ be the fundamental solution associated with \eqref{linear-equation}(see Definition \ref{linear-stable-solu}). If $h\in C(\mathbb{R},L^p(\Omega))$, one can use the method of variation of constant to obtain
\begin{equation}\label{presentation}
w(t)=\Phi(t,0)w(0)+\int^t_0 \Phi(t,\tau)h(\tau)d\tau
\end{equation}
in $L^p(\Omega)$ (see, e.g. \cite[Theorem 5.2.2]{Tanabe}). For general $h\in L^{\infty}(\mathbb{R},L^p(\Omega))$ in \eqref{linear-equation}, similarly as the argument in \cite[p.328-329]{P.Hess1987}, by using the strong continuity of $\Phi(t,s)$ in $s$ and \cite[p.125, (5.33)]{Tanabe}, the following equation:
\begin{equation}\label{differen-equa2}
\frac{\partial }{\partial s}(\Phi(t,s)w(s))=\Phi(t,s)w_t(s)+\Phi(t,s)A(s)w(s)=\Phi(t,s)h(s)
\end{equation}
can be established for any $t\in\widetilde{\mathbb{R}}$. Furthermore, $\Phi(t,s)w(s)$ is in fact a Lipschitz continuous function of $s$ from $\mathbb{R}$ to $L^p(\Omega)$ (hence, $\Phi(t,s)w(s)$ is an absolutely continuous function of $s$ in $L^p(\Omega)$). By using \cite[Corollary A]{Brezis} and integrating $s$ in \eqref{differen-equa2} from $0$ to $t$, one can obtain \eqref{presentation}.

Since $h(\tau)\leq 0$ for a.e $\tau \in\mathbb{R}$, by strong positivity of $\Phi$, one has $\Phi(t,\tau)h(\tau)\leq 0$ for a.e. $\tau \in[0,t]$ ($t>0$); and hence
\[
\int^t_0 \Phi(t,\tau)h(\tau)d\tau\leq 0,\quad \forall t>0.
\]
Therefore,
\begin{equation}\label{l-inequality1}
w(t)\leq \Phi(t,0)w(0).
\end{equation}

Suppose that $u(t,x)$ is not spatially-homogeneous. Then, $w(0)>0$ in $C(\overline \Omega)$ (i.e. $w(0,x)\geq 0$ for all $x\in \overline \Omega$, and $w(0,\cdot)\neq 0$).
Noticing that the skew-product semiflow $\Pi^t$ on $X\times H(f)$ is strongly monotone (see Lemma \ref{strongly-monotone}), $\omega(u_0,g)$ admits a continuous separation (see \cite[Theorem II.4.4]{Shen1998} or \cite[Sec 3.5]{Mierczynski}) as follows: There exists continuous invariant splitting $X=X_1(v,\widetilde g)\oplus X_2(v,\widetilde g)$ ($(v,\widetilde g)\in \omega(u_0,g)$) with $X_1(v,\widetilde g)={\rm span}\{\phi(v,\widetilde g)\}$, $\phi(v,\widetilde g)\in {\rm Int}X^{+}$ and $X_2(v,\widetilde g)\cap X^+=\{0\}$ such that
\begin{equation}\label{invariant-decom}
\Phi(t,v,\widetilde g)X_1(v,\widetilde g)=X_1(\Pi^t(v,\widetilde g)),\quad \Phi(t,v,\widetilde g)X_2(v,\widetilde g)\subset X_2(\Pi^t(v,\widetilde g)).
\end{equation}
Moreover, there are $K,\gamma>0$ satisfying
\begin{equation}\label{exponential-separation}
\|\Phi(t,v,\widetilde g)|_{X_2(v,\widetilde g)}\|\le Ke^{-\gamma t}\|\Phi(t,v,\widetilde g)|_{X_1(v,\widetilde g)}\|
\end{equation} for any $t\ge 0$ and $(v,\widetilde g)\in \omega(u_0,g)$.  Write $w(0)=av_1+v_2$ with $v_1\in X_1(u_0,g)$, $\|v_1\|=1$ and $v_2\in X_2(u_0,g)$. Since $u(t,x)$ is linearly stable, $\sup_{t\geq 0}\|\Phi(t,0)v_1\|$ is bounded by Definition \ref{linear-stable-solu}.

Case (ia): $\|\Phi(t,0)v_1\|$ is bounded away from zero. In this case, there exist $M\geq m>0$ such that $m\leq\inf_{t\geq 0}\|\Phi(t,0)v_1\|\leq\sup_{t\geq 0}\|\Phi(t,0)v_1\|\leq M$. Let $\Gamma=\{\overline v| \Phi(t_n,0)v_1\to \overline v \text{ in $X$ for some $t_n\to\infty$}\}$. Since $\sup_{t\geq 0}\|\Phi(t,0)v_1\|\leq M$, by the regularity of $\Phi(t,0)$, one has $\Gamma\neq \emptyset$. {\it We further claim that $\Gamma\subset \mathrm{Int} X^+$ and $\Gamma$ is a closed subset of $X$}. In fact, for any $\overline v \in \Gamma$, one can find a sequence $\tau_n\to \infty$, such that $\Phi(\tau_n,0)v_1\to \overline v$. By virtue of \eqref{invariant-decom}, $\Phi(\tau_n,0)v_1\in X_1(\Pi^{\tau_n}(u_0,g))$. Without loss of generality, one may assume that $\Pi^{\tau_n}(u_0,g)\to(\overline u,\overline g)\in\omega(u_0,g)$. This implies that $\overline v\in X_1(\overline u,\overline g)\subset \mathrm{Int} X^+\cup\{0\}$. Note also that $\|v\|\geq m>0$. Then $\overline v\in \mathrm{Int} X^+$. Next, we prove that $\Gamma$ is closed in $X$. It suffices to prove that: if the sequence $v_n\in \Gamma$ converges to some $v^*\in X$, then $v^*\in \Gamma$. Indeed, for any positive integer $k\in \mathbb{N}$, there is $n_k>0$ such that $\|v_n-v^*\|<\frac{1}{2k}$ for any $n\geq n_k$, particularly, $\|v_{n_k}-v^*\|<\frac{1}{2k}$. Noticing that $v_{n_k}\in \Gamma$, there exists $t_{n_k}\in\mathbb{R}^+$  such that $\|\Phi(t_{n_k},0)v_1-v_{n_k}\|<\frac{1}{2k}$; and hence, $\|\Phi(t_{n_k},0)v_1-v^*\|<\frac{1}{k}$. Without loss of generality, one may assume $t_{n_k}\to \infty$ as $k\to \infty$, by letting $k\to \infty$, one has $\Phi(t_{n_k},0)v_1\to v^*$ as $t_{n_k}\to\infty$, which means $v^*\in \Gamma$. Thus we have proved the claim.

Recall that $\omega(u_0,g)$ is an almost automorphic minimal set, there is a sequence $t_n\to \infty$ such that $\Pi^{t_n}(u_0,g)\to(u_0,g)$. By choosing a subsequence, still denoted by $t_n$, one has that $\Phi(t_n,0)v_1\to v^*\in X_1(u_0,g)\cap \mathrm{Int} X^+$; in other words, there is a positive constant $a^*$ such that $v^*=a^* v_1$. Moreover, $\Phi(t,0)a^* v_1 \in \Gamma$ for any fixed $t\in\mathbb{R}^+$. Therefore, $\Phi(t_n,0)a^* v_1 \in \Gamma$. Observing that $\Phi(t,0)$ is a linear operator and $\Gamma$ is a closed set, $\Phi(t_n,0)a^* v_1=a^*\Phi(t_n,0)v_1\to (a^*)^2v_1\in \Gamma$. Similarly, by repeating this argument, we have $(a^*)^n v_1\in \Gamma$ for any $n\in \mathbb{N}$. Furthermore, by virtue of the boundedness of $\Gamma$, $a^*\leq 1$. If $0<a^*<1$, then it is not hard to see $0\in \Gamma$, a contradiction to $\Gamma\subset \mathrm{Int} X^+$. Therefore, $a^*=1$. Note that $\sup_{t\geq 0}\|\Phi(t,0)v_1\|\leq M$, by \eqref{exponential-separation}, $\|\Phi(t,0)v_2\|\to 0$ as $t\to \infty$. By letting $t=t_n$ and $n\to \infty$ in \eqref{l-inequality1}, one has
$$
w(0)\leq av_1.
$$
Therefore, $v_2\leq 0$. Observing that $X_2(u_0,g)\cap X^+=\{0\}$, $v_2=0$. Hence, $w(0)=av_1$ with $a\geq 0$. If $a>0$, then $w(0)=av_1\in \mathrm{Int}X^+$, a contradiction to that $w(0)\notin \mathrm{Int}X^+$. Thus, $a=0$ and $u(t,x)$ is spatially-homogeneous.

Case (ib): $\inf_{t\geq 0}\|\Phi(t,0)v_1\|=0$. There is a sequence $\{t_n\}\subset \mathbb{R}^+$ such that $\|\Phi(t_n,0)v_1\|<\frac{1}{n}$. When the sequence $\{t_n\}$ is bounded, there exist $t^*\in\mathbb{R}^+$ and a subsequence $t_{n_k}$ such that $t_{n_k}\to t^*$ as $k\to\infty$. Due to $\Phi(t,0)v_1$ is continuous with respect to $t$, $\Phi(t^*,0)v_1=0$, which contradicts to the strong positivity of $\Phi(t,0)$. Thus, $\{t_n\}$ is unbounded. For simplicity, we assume $t_n\to\infty$ as $n\to \infty$. Again by  \eqref{l-inequality1}, we have
\begin{equation}\label{sequence-inequa}
0\leq w(t_n)\leq a\Phi(t_n,0)v_1+\Phi(t_n,0)v_2.
\end{equation}
For such $t_n$, by choosing a subsequence if necessary, one may assume that $\Pi^{t_n}(u_0,g)\to (u^*,g^*)\in \omega(u_0,g)$ and $c(t_n)\to c^*$. Let $t_n\to\infty$ in \eqref{sequence-inequa}, one has $0\leq w^*\leq 0$ where $w^*=c^*-u^*$. So, $w^*_0=0$, that is, $u^*(x)\equiv c^*$ on $\overline \Omega$ is spatially-homogeneous. By the minimality of $\omega(u_0,g)$, every point in $\omega(u_0,g)$ is spatially-homogeneous, thus, $u_0(x)=c(0)$ on $\overline \Omega$, a contradiction.

Thus, we have proved that $u(t,x)$ is spatially-homogeneous when $q(t)\leq 0$ a.e. in $\mathbb{R}$.

\vskip 2mm
Case (ii). There is a positive measure subset $E$ in $\mathbb{R}$ such that $q(t)>0$ for all $t\in E$. In the following, we will show that this case cannot occur. Actually, this can be proved by the same arguments in \cite[p.329-330]{P.Hess1987}. For the sake of completeness, we give a detailed proof below.

Suppose that there exists such subset $E\subset \mathbb{R}$. Then one can find some $t_0\in \mathbb{\widetilde R}$ such that $q(t_0)>0$. Recall that $c'(t)$ is continuous on $\mathbb{\widetilde R}$, there are nontrivial interval $[t_1,t_2]\subset \mathbb{R}$ and $\epsilon_0>0$ satisfying $q(t)\geq \epsilon_0$ for a.e. $t\in[t_1,t_2]$. By the concavity of $g(t,\cdot,\cdot)$, we have
\begin{equation}\label{concavity-inequa}
g(t,u,\nabla u)\leq g(t,c,0)-\frac{\partial g}{\partial u}(t,c,0)(c-u)-\sum_{i=1}^{n}\frac{\partial g}{\partial p_i}(t,c,0)(c-u)_{x_i}.
\end{equation}
Let
$$
\overline A(t)=A-\sum_{i=1}^{n}\frac{\partial g}{\partial p_i}(t,c,0)\frac{\partial}{\partial x_i}-\frac{\partial g}{\partial u}(t,c,0)
$$
and
$$
\overline h(t)=\frac{d}{dt}(c-u)(t)+\overline A(t)(c-u)(t).
$$
Combing with \eqref{differen-equa} and \eqref{concavity-inequa}, one can obtain $\overline h(t)\geq q(t)\geq \epsilon_0$ for a.e. $t$ in $[t_1,t_2]$. On the other hand, similarly as in \eqref{presentation}, we have
$$
(c-u)(t_2)=\overline\Phi(t_2,t_1)(c-u)(t_1)+\int^{t_2}_{t_1}\overline\Phi(t_2,s)\overline h(s)ds,
$$
where $\overline \Phi(\cdot,\cdot)$ is the fundamental solution of $u_t=\overline A(t)u$. Note that
$$
\int^{t_2}_{t_1}\overline\Phi(t_2,s)\overline h(s)ds\geq \epsilon_0\int^{t_2}_{t_1}\overline\Phi(t_2,s)\mathbf{1} ds\gg 0\quad \text{in}\ C(\overline\Omega),
$$
where $\mathbf{1}$ is the unit constant-function. Together with $\overline\Phi(t_2,t_1)(c-u)(t_1)\geq 0$, it follows that $(c-u)(t_2)\gg 0$ in $C(\overline\Omega)$, a contradiction to the definition of $c$. So, Case (ii) cannot happen.

Therefore, we have proved that $u(t,x)\equiv \varphi(t)$ is a spatially-homogeneous solution of \eqref{induced-equation}; and moreover, it is an almost automorphic solution of \eqref{ODE}. Finally, it follows from Lemma \ref{module} and \cite[Theorem III.3.4(c)]{Shen1998}  that $\mathcal{M}(\varphi)\subset \mathcal{M}(g)=\mathcal{M}(f)$. Thus, we have completed the proof.
\end{proof}

\end{document}